\def\ec{\end{center}}
\def\bc{\begin{center}}
\def\ec{\end{center}}
\newtheorem{definition}{Definition}[section]
\newtheorem{theorem}{Theorem}[section]
\newtheorem{proof}{Proof}[section]
\begin{document}
\bc {\bf On a family  of higher order recurrence relations: symmetries, formula solutions, periodicity and stability analysis
  }\ec
\medskip
\bc
Mensah Folly-Gbetoula\footnote{Corresponding author:\\ Mensah.Folly-Gbetoula@wits.ac.za(M. Folly-Gbetoula)
	}
\vspace{1cm}
\\School of Mathematics, University of the Witwatersrand, Wits 2050, Johannesburg, South Africa.\\

\ec
\begin{abstract}
\noindent In this paper, we present formula solutions of a family of difference equations of higher order. We discuss the periodic nature of the solutions and we investigate the stability character of the equilibrium points. We utilize Lie symmetry analysis as part of our approach together with some number theoretic functions. Our findings generalize certain results in the literature. 
\noindent
\end{abstract}
\textbf{Key words}. Difference equation; symmetry; reduction; group invariant solutions; periodicity\\
\textbf{2010 MSC}. 39A10; 39A13; 39A99
\section{Introduction} \setcounter{equation}{0}
\noindent There has been great attention and focus on difference equations. Just like differential equations, there are techniques that one can use to solve difference equations. One method for solving them is to use Lie symmetry analysis. In this approach, one finds an invariant which can be used to find a simpler form of the equation. Amongst the first people to use Lie symmetry analysis to solve difference equations are Maeda \cite{Maeda} and Hydon \cite{hyd}. Examples of the use of difference equations in real-life include modeling the  proliferation of disease, loan payments, population studies, etc. \par \noindent
Recurrence equations of a general order have been investigated in the literature and from different approaches by several authors \cite{mmd, ladas, alma, beverton, banas, El, ndm, QM, DC, jocaa}.  In their work, Almatrafi and others \cite{alma} discussed the exact solutions, stability, oscillation and periodic aspects of the the difference equations
 \begin{equation}\label{eq1}
\eta _{n+1}=\frac{\eta_{n-4k+1}}{\pm 1 \pm \prod\limits_{i=1}^{k}
 	\eta_{n-4i+1}}.
 \end{equation}
These equations are special cases of a more generalized setting \begin{equation}\label{eq1g}
\eta _{n+1}=\frac{\eta_{n-4k+1}}{a_n +b_n \prod\limits_{i=1}^{k}
	\eta_{n-4i+1}},
\end{equation}
where $a_n$ and $b_n$ are arbitrary real sequences. One can readily see that with a suitable change of variables, the above  equations can be derived from the higher order Berverton-Holt difference \cite{beverton} equation 
\begin{equation}\label{app}
r_{n + l} = \frac{\mu_nK_nr_{n}}{K_n + (\mu_n-1)r_{n}}
\end{equation}
where $\mu_{n}>1$ denotes the growth rate, $K_n$ is the carrying capacity and $r_0, r_1, \ldots, r_{l-1}$ are the positive initial values. \\ \noindent
 In this paper, we perform a symmetry analysis of the equivalent equation
 \begin{equation}\label{eq1'}
 u_{n+4k}=\frac{u_n}{A_n+B_n\prod\limits_{i=1}^{k}
 	u_{n+4(i-1)}},
 \end{equation}
 for some arbitrary real sequences $A_n$ and $B_n$ where $u_0, u_1, \ldots, u_{4k-1}$ are initial values. Using symmetries, we obtain explicit formulas for the solution of \eqref{eq1'} and we deduce the solutions of \eqref{eq1g} from those of \eqref{eq1'}.  We also study the periodic nature of the solutions and a stability analysis of the difference equation is investigated. \par \noindent  
 The derivation of symmetries for higher order recurrences involves cumbersome calculations and, to the best of our knowledge, there are no computer packages that generate these symmetries. For more on Lie symmetry analysis of difference equations, the reader can refer to \cite{hyd} and, among others, the articles \cite{QR, md, mmd}.
\subsection{Preliminaries}
Consider the difference equation:
\begin{align}\label{eq2}
u_{n+4k}=\mathcal{G}(n,u_n,u_{n+4}, u_{n+8} \dots, u_{n+4k-4}),
\end{align}
 for some smooth function $\mathcal{G}$ satisfying ${\partial \mathcal{G}}/{\partial u_{n}} \neq 0$.  Symmetry groups are connected to the determination of infinitesimal transformations. Let
\begin{align}\label{gtrans}
\hat{u}_{n}=u_n+ \varepsilon Q(n,u_{n})+O(\varepsilon ^2),
\end{align}
be the one parameter Lie group of transformations of  \eqref{eq2} with the corresponding generator
\begin{align}\label{vector}
\mathcal{V}=Q(n,u_n)\frac{\partial\quad}{\partial u_n}.
\end{align}
Note that the knowledge of the characteristic $Q=Q(n,u_n)$ requires the knowledge of the $(4k-4)$-th prolongation of $\mathcal{V}$
\begin{align}\label{vectorp}
\mathcal{V}^{[4k-4]}=Q\frac{\partial\quad}{\partial u_{n}}+(S^4Q)\frac{\partial\quad}{\partial u_{n+4}}+\dots+(S^{4k-4}Q)\frac{\partial\quad}{\partial u_{n+4k-4}},
\end{align}
where $S^i:n\rightarrow n+i$ is the forward shift operator. It is known that \eqref{gtrans} is a symmetry group if and only if the condition  
 \begin{align}\label{LSC}
S^{4k}Q(n,u_{n})- \mathcal{V}^{[4k-4]}(\mathcal{G})=0 {\Big|}_{u_{n+5k}=\mathcal{G}(n,u_n,u_{n+4}, \dots, u_{n+4k-4})}
\end{align}
holds. Suppose that the characteristic is obtained by solving the functional equation \eqref{LSC}. One can use the canonical coordinate  \cite{JV}
 \begin{align}\label{cano}
c_n= \int{\frac{du_n}{Q(n,u_n)}}
\end{align}
to derive the invariants which may be used to lower the order of the difference equations. In \cite{hyd}, the author attests that with the choice of canonical coordinate \eqref{cano},  the recurrence equation can, without fail, be represented in the form
$c_{n+1}-c_n=d_n$
whose the solution takes the shape
 \begin{align}\label{redp}
c_n=  \sum_{k=n_0}^{n}d_k+w_1
\end{align}
for some constant $w_1$. From \eqref{redp}, it is not difficult to find the solution expressed in terms of the original variables. In this paper, our solution is obtained via the use of the canonical coordinate through a  different methodology. \par \noindent
The following theorem and definitions \cite{ladas} are useful for studying local and globally stability aspects of the equilibrium point. 
\begin{definition}
	The equilibrium point $\bar{u}$ of \eqref{eq2} is said to be locally stable if for any $\epsilon >0$ such that if $\{ u_n\}_{n=0}^{\infty}$ is a solution of \eqref{eq2} with 
	\begin{align}
	|u_0-\bar{u}|+ |u_1-\bar{u}|+\dots+ |u_{4k-2}-\bar{u}|+ |u_{4k-1}-\bar{u}|<\delta,
	\end{align}
	then 
		\begin{align}
		|u_n-\bar{u}|<\epsilon \quad \text {for all}    \quad n\geq 0.
		\end{align}
\end{definition}
\begin{definition}
	The equilibrium point $\bar{u}$ of \eqref{eq2} is said to be a global attractor if for any solution  $\{ u_n\}_{n=0}^{\infty}$ of \eqref{eq2},  
	\begin{align}
\lim_{n \leftarrow \infty } u_n =\bar{u}.
	\end{align}
\end{definition}
\begin{definition}
	The equilibrium point $\bar{u}$ of \eqref{eq2} is globally asymptotically stable if $\bar{u}$ is locally stable and is a global attractor of \eqref{eq2}.
\end{definition}
Let 
\begin{align}
p_i =\frac{\partial f}{\partial u_{n+i}}(\bar{u}, \dots, \bar{u}) , \quad i=4r, \quad r=0,1,\dots, k-1.
\end{align}
It follows that 
\begin{align}\label{ceq}
\lambda^{4k}-p_{4k-4}\lambda^{4k-4}-\dots-p_{4}\lambda^{4}-p_0=0
\end{align}
is the corresponding characteristic equation of \eqref{eq2} about the equilibrium point $\bar{u}$.
\begin{theorem}
Suppose $f$ is a smooth function defined on some open neighborhood of $\bar{u}$. Then the following statements are true:
\begin{itemize}
	\item[(i)] The equilibrium point $\bar{u}$ is locally asymptotically stable if all the roots of $ceq$ have absolute value less than one.
	\item[(ii)] The equilibrium point $\bar{u}$ is unstable if at least one root of $ceq$ has absolute value greater than one.
\end{itemize}
\end{theorem}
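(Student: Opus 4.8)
The plan is to recast the scalar equation \eqref{eq2}, in its autonomous form $u_{n+4k}=f(u_n,u_{n+4},\dots,u_{n+4k-4})$, as a first-order system on $\mathbb{R}^{4k}$, to identify the equilibrium $\bar u$ with a fixed point of the resulting map, and then to invoke the classical linearized-stability theorem (Lyapunov's indirect method) for maps, whose hypotheses are phrased in terms of the spectral radius of the Jacobian at the fixed point. The whole content of the theorem then reduces to the claim that the eigenvalues of this Jacobian are exactly the roots of the characteristic equation \eqref{ceq}; granting that, statements (i) and (ii) are immediate consequences of the general result.

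First I would set $X_n=(u_n,u_{n+1},\dots,u_{n+4k-1})^{T}\in\mathbb{R}^{4k}$ and introduce the map
\begin{align*}
F(x_1,\dots,x_{4k})=\bigl(x_2,x_3,\dots,x_{4k},\,f(x_1,x_5,\dots,x_{4k-3})\bigr),
\end{align*}
so that the recurrence is equivalent to $X_{n+1}=F(X_n)$. The equilibrium corresponds to the fixed point $\bar X=(\bar u,\dots,\bar u)^{T}$, and, since all norms on $\mathbb{R}^{4k}$ are equivalent and the two descriptions share the same local neighbourhoods, local asymptotic stability (respectively instability) of $\bar u$ holds if and only if it holds for $\bar X$.

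Next I would compute the Jacobian $J=DF(\bar X)$. Its first $4k-1$ rows come from the shift and form a superdiagonal of ones, while the last row records the partials of $f$; because $f$ depends only on the entries in positions $1+4r$, the only nonzero entries of that row are $p_{4r}$ placed in columns $1+4r$, for $r=0,1,\dots,k-1$. Thus $J$ is of companion type, and expanding $\det(\lambda I-J)$ along its last row (equivalently, using the standard companion-matrix identity) gives
\begin{align*}
\det(\lambda I-J)=\lambda^{4k}-p_{4k-4}\lambda^{4k-4}-\dots-p_4\lambda^4-p_0,
\end{align*}
which is precisely the polynomial appearing in \eqref{ceq}. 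Hence the spectrum of $J$ coincides with the root set of the characteristic equation.

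With this identification established, part (i) follows because, when every root obeys $|\lambda|<1$, the spectral radius of $J$ is strictly less than one; choosing a norm in which $\|J\|<1$ and using the expansion $F(X)-\bar X=J(X-\bar X)+o(\|X-\bar X\|)$ produces a local contraction and hence local asymptotic stability of $\bar X$. Part (ii) follows because a root with $|\lambda|>1$ supplies an expanding eigendirection along which the linear part dominates the higher-order remainder near $\bar X$, so that no neighbourhood of $\bar X$ is forward invariant and $\bar X$ is unstable. I expect the only delicate point to be the general map theorem itself; since it is classical and is exactly the statement taken from \cite{ladas}, I would quote it and reduce to it rather than reprove it, leaving the companion-matrix determinant as the sole genuinely new computation.
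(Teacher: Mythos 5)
The paper does not actually prove this theorem: it is imported verbatim from \cite{ladas} as a classical tool (the linearized stability theorem), so there is no in-paper argument to compare against. Your proposal is the standard textbook proof of that classical result, and it is correct in outline: the reduction of $u_{n+4k}=f(u_n,u_{n+4},\dots,u_{n+4k-4})$ to the first-order system $X_{n+1}=F(X_n)$ on $\mathbb{R}^{4k}$, the observation that the paper's stability definition (a sum of coordinatewise deviations of the $4k$ initial values) is exactly an $\ell^1$ norm on $X_0-\bar X$ so the two notions of stability coincide, and the companion-matrix computation showing $\det(\lambda I-DF(\bar X))=\lambda^{4k}-p_{4k-4}\lambda^{4k-4}-\dots-p_4\lambda^4-p_0$ are all sound; in particular the sparse last row correctly places $p_{4r}$ in column $1+4r$. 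Part (i) via a norm adapted to the spectral radius ($\|J\|\le\rho(J)+\varepsilon<1$) is complete as sketched. The only thin spot is part (ii): ``an expanding eigendirection along which the linear part dominates'' is not by itself a proof, since the eigenvector may be complex and the nonlinear remainder must be controlled on the stable/unstable splitting (the usual arguments go through an adapted norm on the unstable subspace or the unstable manifold theorem). You acknowledge this by deferring to the classical map theorem, which is legitimate and is in fact exactly what the paper itself does by citing \cite{ladas}; if you wanted a self-contained proof, that instability step is the one piece that still needs to be written out carefully.
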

\begin{definition}
	The equilibrium point $\bar{u}$ of \eqref{eq2} is called non-hyperbolic if there exists a root of \eqref{ceq} with absolute value equal to one.
\end{definition}
\section{Lie analysis and solutions}
To derive the characteristic function $Q$ admitted by \eqref{eq1'}, that is, 
 \begin{align}\label{xn}
u_{n+4k}=\mathcal{G}=\frac{u_n}{A_n+B_n\prod\limits_{i=1}^{k}
	u_{n+4(i-1)}},
\end{align}
we apply the symmetry constraint equation  \eqref{LSC} to \eqref{xn} to get
\begin{align}\label{a1}
 &Q(n+5k, u_{n+5k})-\sum_{i=0}^{k-1}  \mathcal{G}_ {,u_{n+4i}}Q(n+4i, u_{n+4i})=0,
\end{align}
where $f_{,x}$ denotes the partial derivative of $f$ with respect to $x$.
To solve for $Q$, we first apply the differential operator {\scriptsize ${\partial}/{\partial u_{n+4}}-({u_{n+8}}/{u_{n+4}}){\partial}/{\partial u_{n+8}}$} on \eqref{a1}. This gives{\scriptsize
\begin{align}
&\left(\frac{u_{n+8}}{u_{n+4}}\mathcal{G}_{,u_{n+8}u_{n+4}}-\mathcal{G}_{,u_{n+4}u_{n+4}}\right)Q(n+4, u_{n+4})+
\left(\frac{u_{n+8}}{u_{n+4}}\mathcal{G}_{,u_{n+8}u_{n+8}}-\mathcal{G}_{,u_{n+4}u_{n+8}}\right)Q(n+8, u_{n+8})\nonumber\\&
+
\sum_{i\geq 3} \left(\frac{u_{n+8}}{u_{n+4}}\mathcal{G}_{,u_{n+8}u_{n+4i}}\right. 
 -\mathcal{G}_{,u_{n+4}u_{n+4i}}\bigg) Q(n+4i, u_{n+4i})-
\frac{B_nu_n^2\prod\limits_{i=3}^{k}
	u_{n+4(i-1)}}{\left(A_n+B_n\;\;\mathclap{\prod\limits_{i=1}^{k}}\;\;
	u_{n+4(i-1)}\right)^2}\big(Q'(n+8, u_{n+8})\nonumber\\&-Q'(n+4,u_{n+4})\big)+\left( \frac{u_{n+8}}{u_{n+4}}\mathcal{G}_{,u_{n+8}u_{n}}-\mathcal{G}_{,u_{n+4}u_{n}}\right)Q(n, u_n)=0
\end{align}}
which simplifies to {\scriptsize
\begin{align}
&u_{n+8}Q(n+4, u_{n+4})-u_{n+4}Q(n+8, u_{n+8})+u_{n+4}u_{n+8}(Q'(n+8,u_{n+8})-Q'(n+4,u_{n+4}))=0.
\end{align}}
We then differentiate the above equation with respect to $u_{n+4}$ twice to get
 \begin{align}\label{diffQ}
\left( u_{n+4}Q'(n+4,u_{n+4})-Q(n+4, u_{n+4})\right)''=0.
 \end{align}
The general solution of \eqref{diffQ} takes the form  
\begin{align}\label{solQ}
Q(n,u_{n})=\alpha _nu_n+\beta _n (u_n\ln u_n +u_n) +\gamma _n
\end{align} 
for some functions $\alpha _n$, $\beta _n$ and $\gamma _n$ of $n$. Next, we substitute $ Q$ and its corresponding shifts in \eqref{a1}.  Bearing in mind that $\alpha _n$, $\beta _n$ and $\gamma _n$ are independent of $u_{n}$ and their shifts, we use the method of separation. It turns out that $\beta_n$ is equal to zero and the system of overdetermined equations resulting from the separation is as follows: 
\begin{align}
\label{overdetermining}
1&:  A_n^2\gamma_{n+4k} -A_n \gamma_n =0\nonumber\\
u_n&:A_n\alpha_{n+4k}-A_n\alpha_n =0\nonumber\\
u_n^2u_{n+4}\dots u_{n+4k-4}&: B_n(\alpha_{n+4k}+\alpha_{n+4}+\alpha_{n+8}+\dots+\alpha_{n+4k-8}+\alpha_{n+4k-4})=0\nonumber\\
  u_nu_{n+4}\dots u_{n+4k-4}&:2A_nB_n\gamma_{n+4k}=0
\end{align}
which reduces to 
\begin{align}
&\gamma_n =0, \quad \alpha_{n+4k}-\alpha_n =0, \\ & \alpha_{n}+\alpha_{n+4}+\alpha_{n+8}+\dots+\alpha_{n+4k-8}+\alpha_{n+4k-4}=0.\label{constraint}
\end{align}
Equation \eqref{constraint} is a linear difference equation with constant coefficients and has the characteristic equation 
 \begin{align}\label{constraintchara} 1+r^{4}+r^{8}+\dots+r^{4k-8}+r^{4k-4}=0.
 \end{align}
It is well known that if $r$ is a solution of \eqref{constraintchara}, then $\alpha_n = r^n$ is a solution of \eqref{constraint}. Multiplying \eqref{constraintchara} by $1-r^4$ and solving the resulting equation gives 
\begin{align}
r_1(s)=e^{\frac{is\pi}{2k}},\quad r_2(s)=-e^{\frac{is\pi}{2k}},\quad r_3(s)=ie^{\frac{is\pi}{2k}},\quad r_4(s)=-ie^{\frac{is\pi}{2k}}, 
\end{align}
for $ 1\leq s\leq k-1.$ Thus, from \eqref{solQ}, the finite dimensional Lie algebra is spanned by the vectors fields
{\scriptsize \begin{align}\label{gener}
&X_{1}(s)=e^{\frac{ins\pi}{2k}} u_{n}{\frac{\partial\quad} {\partial u_{n}}},\;X_{2}(s)=(-1)^ne^{\frac{ins\pi}{2k}} u_{n}{\frac{\partial\quad} {\partial u_{n}}},\\
& X_{3}(s)=i^ne^{\frac{ins\pi}{2k}} u_{n}{\frac{\partial\quad} {\partial u_{n}}},\;X_{4}(s)=(-i)^ne^{\frac{ins\pi}{2k}} u_{n}{\frac{\partial\quad} {\partial u_{n}}}
\end{align}}
for $ 1\leq s\leq k-1.$
To obtain the compatible variable, we use the  canonical coordinate
\begin{equation}\label{a7}
C(n)= \int\frac{du_n}{\alpha_n u_{n}}=\frac{1}{\alpha_n}\ln|u_n|,
\end{equation}
 where $\alpha_n$ satisfies \eqref{constraint}. Replacing $\alpha_n$ with $C(n)\alpha_n$ in the left hand side of equation in \eqref{constraint} yields the group invariant
$I_n=\ln|u_nu_{n+4}\dots u_{n+4k-4}|$
since $(X_r(s)) I_n =0$ for $r=1, 2, 3, 4$. Hence, using \eqref{xn}, we have
$e^{-I_{n+4}}={A_ne^{-I_n}}+ B_n$.
For the sake of simplicity, we instead use the invariant 
\begin{align}\label{rn}
r_n=\exp{(-I_n)}=\frac{1}{u_nu_{n+4}\dots u_{n+4k-4}}.
\end{align}
On one hand, shifting \eqref{rn} four times and replacing $u_{n+4k}$ in the resulting equation yields
 \begin{align}
 \label{a9g}
r_{n+4}=A_n{r_n}+ A_n
\end{align}
whose iteration gives
  \begin{align}
  \label{rnsol}
  r_{4n+j}=r_j\left( \prod\limits_{k_1=0}^{n-1}A_{4k_1+j}\right) + \sum\limits_{l=0}^{n-1}\left(B_{4l+j}\prod\limits_{k_2=l+1}^{n-1} A_{4k_2+j}\right)
  \end{align}
for $ j=0,1,2,3$. On the other hand, using the same relation given in \eqref{rn}, we have 
 \begin{align}\label{unrn}
 u_{n+4k}= \frac{r_n}{r_{n+4}}u_n.
\end{align}
We iterate \eqref{unrn} and its solution in closed form takes the form
\begin{align}\label{unrn'}
u_{4kn+i}= u_i \left( \prod\limits_{s=0}^{n-1}\frac{r_{4ks+i}}{r_{4ks+4+i}}\right), \quad i=0,\dots, 4k-1.
\end{align}
From the known fact that any integer $r$ can be written as $r=4\lfloor r/4\rfloor+\tau(r), \; 0\leq \tau(r)\leq 3$, where $\tau(r)$ is the remainder when $r$ is divided by $4$, we can rewrite \eqref{unrn'} as follows:
 \begin{align}\label{unrnsol}
 u_{4kn+i}= u_i \left( \prod\limits_{s=0}^{n-1}\frac{r_{4(ks+\lfloor \frac{i}{4}\rfloor)+\tau(i)}}{r_{{4(ks+\lfloor \frac{i+4}{4}\rfloor)+\tau(i+4)}}}\right),
 \end{align}
where $i=0,\dots, 4k-1$ and $0\leq \tau(i)\leq 3$. Using \eqref{unrn'} in \eqref{unrnsol}, we obtain {\scriptsize
  \begin{align}\label{unsol'}
  u_{4kn+i}=& u_i  \prod\limits_{s=0}^{n-1}\frac{r_{\tau(i)}\left( \prod\limits_{k_1=0}^{\substack{ks+\\ \lfloor \frac{i}{4}\rfloor-1}}A_{4k_1+\tau(i)}\right) + \sum\limits_{l=0}^{\substack{ks+\\\lfloor \frac{i}{4}\rfloor-1}}\left(B_{4l+\tau(i)}\prod\limits_{k_2=l+1}^{\substack{ks\\+\lfloor \frac{i}{4}\rfloor-1}} A_{4k_2+\tau(i)}\right)}{r_{\tau(i+4)}\left( \prod\limits_{k_1=0}^{\substack{ks+\\\lfloor \frac{i+4}{4}\rfloor-1}}A_{4k_1+\tau(i+4)}\right) + \sum\limits_{l=0}^{\substack{ks+\\\lfloor \frac{i+4}{4}\rfloor-1}}\left(B_{4l+\tau(i+4)}\prod\limits_{k_2=l+1}^{\substack{ks+\\\lfloor \frac{i+4}{4}\rfloor-1}} A_{4k_2+\tau(i+4)}\right)},
\end{align}}
$i=0,\dots, 4k-1$. Noting that $\tau(i+4)=\tau(i)$ and $1/r_i=\prod_{j=0}^{k-1}{u_{i+4j}}$, the above equation simplifies
{\scriptsize \begin{align}\label{unsol}
 u_{4kn+i}=&  u_i  \prod\limits_{s=0}^{n-1}\frac{\left( \prod\limits_{k_1=0}^{\substack{ks+\\ \lfloor \frac{i}{4}\rfloor-1}}A_{4k_1+\tau(i)}\right) + \left(\prod\limits_{j=0}^{k-1}{u_{\tau(i)+4j}}\right)\sum\limits_{l=0}^{\substack{ks+\\\lfloor \frac{i}{4}\rfloor-1}}\left(B_{4l+\tau(i)}\prod\limits_{k_2=l+1}^{\substack{ks\\+\lfloor \frac{i}{4}\rfloor-1}} A_{4k_2+\tau(i)}\right)}{\left( \prod\limits_{k_1=0}^{\substack{ks+\lfloor \frac{i}{4}\rfloor}}A_{4k_1+\tau(i)}\right) + \left(\prod\limits_{j=0}^{k-1}{u_{\tau(i)+4j}}\right)\sum\limits_{l=0}^{\substack{ks+\lfloor \frac{i}{4}\rfloor}}\left(B_{4l+\tau(i)}\prod\limits_{k_2=l+1}^{\substack{ks+\lfloor \frac{i}{4}\rfloor}} A_{4k_2+\tau(i)}\right)},
 \end{align} }
$i=0,\dots, 4k-1$. The solution of \eqref{eq1g} is obtained by back shifting \eqref{unsol} $4k-1$ times. Hence,  the closed form solution of \eqref{eq1g} is given by
 {\scriptsize
 	\begin{align}\label{xnsol}
 	\eta_{4kn-4k+1+i}=&\eta_{i-4k+1}\;\; \mathclap{\prod\limits_{s=0}^{n-1}}\;\;\frac{\left(\quad \mathclap{\prod\limits_{k_1=0}^{\substack{ks+\\ \lfloor \frac{i}{4}\rfloor-1}}}\;\;a_{4k_1+\tau(i)}\right) + \left(\prod\limits_{j=0}^{k-1}{\eta_{\tau(i)-4k+1+4j}}\right)\sum\limits_{l=0}^{\substack{ks+\\\lfloor \frac{i}{4}\rfloor-1}}\left(b_{4l+\tau(i)}\;\;\mathclap{\prod\limits_{k_2=l+1}^{\substack{ks+\\\lfloor \frac{i}{4}\rfloor-1}}}\;\; a_{4k_2+\tau(i)}\right)}{\left(\;\quad \mathclap{\prod\limits_{k_1=0}^{\substack{ks+\lfloor \frac{i}{4}\rfloor}}}\;\;a_{4k_1+\tau(i)}\right) + \left(\prod\limits_{j=0}^{k-1}{\eta_{\tau(i)-4k+1+4j}}\right)\sum\limits_{l=0}^{\substack{ks+\lfloor \frac{i}{4}\rfloor}}\left(b_{4l+\tau(i)}\;\;\mathclap{\prod\limits_{k_2=l+1}^{\substack{ks+\lfloor \frac{i}{4}\rfloor}}}\quad a_{4k_2+\tau(i)}\right)}.
 	\end{align}} \noindent
 Observe that if $\{a_n\}$ and $\{b_n\}$ are constant sequences, i.e. $a_n = a$ for all $n$ and $b_n = b$ for all $n$, then the solutions of \eqref{eq1g} and \eqref{eq1'} are given by
   	\begin{align}\label{xnsolcst}
   	\eta_{4kn-4k+1+i}	=& \eta_{i-4k+1}  \prod\limits_{s=0}^{n-1}\frac{ a^{ks+ \lfloor \frac{i}{4}\rfloor} + b\left(\prod\limits_{j=0}^{k-1}{\eta_{\tau(i)-4k+1+4j}}\right)\sum\limits_{l=0}^{\substack{ks+\\\lfloor \frac{i}{4}\rfloor-1}}a^l}{a^{ks+ \lfloor \frac{i}{4}\rfloor+1}  +b \left(\prod\limits_{j=0}^{k-1}{\eta_{\tau(i)-4k+1+4j}}\right)\sum\limits_{l=0}^{\substack{ks+\lfloor \frac{i}{4}\rfloor}}a^l}
   	\end{align}
   	and 
 \begin{align}\label{unsolcst}
 u_{4kn+i}=& u_i   \prod\limits_{s=0}^{n-1}\frac{A^{ks+ \lfloor \frac{i}{4}\rfloor} + B\left(\prod\limits_{j=0}^{k-1}{u_{\tau(i)+4j}}\right)\sum\limits_{l=0}^{\substack{ks+\\\lfloor \frac{i}{4}\rfloor-1}}A^l}{A^{ks+ \lfloor \frac{i}{4}\rfloor+1}  + B\left(\prod\limits_{j=0}^{k-1}{u_{\tau(i)+4j}}\right)\sum\limits_{l=0}^{\substack{ks+\\\lfloor \frac{i}{4}\rfloor}}A^l},
 \end{align}
respectively.\par \noindent In the following section, we investigate some special cases. One of the aims  is to realize some results in \cite{alma}.
\section{Special cases}
 \subsection{The case when $a = 1$ and $b$ is a  constant}We investigate the case when $a = 1$ and $b$ constant. In this case, from \eqref{xnsolcst}, the solution is given by 
 	\begin{align}\label{xnsolcsta1}
 	\eta_{4kn-4k+1+i}	=& \eta_{i-4k+1} \prod\limits_{s=0}^{n-1}\frac{1 + b\left(\prod\limits_{j=0}^{k-1}{\eta_{\tau(i)-4k+1+4j}}\right)(ks+\lfloor \frac{i}{4}\rfloor)}{1  +b \left(\prod\limits_{j=0}^{k-1}{\eta_{\tau(i)-4k+1+4j}}\right)(ks+\lfloor \frac{i}{4}\rfloor+1)},
 	\end{align}
$i=0,\dots, 4k-1$. Replacing $b=\pm 1$ in \eqref{xnsolcsta1} yields the results in \cite{alma} (see Theorems 1 and 6). In fact, noting that $\tau(4k-1-j)=3-\tau(j)=3-j+4\lfloor j/4\rfloor$, we have:
  	\begin{align}\label{theo1and6}
  	\eta_{4kn-j}=&	\eta_{4kn-4k+1+(4k-1-j)},\qquad j=0, 1, \dots, 4k-1\\
  		=& \eta_{-j} \prod\limits_{s=0}^{n-1}\frac{1 + b\left(\prod\limits_{r=0}^{k-1}{\eta_{\tau(4k-1-j)-4k+1+4r}}\right)(ks+\lfloor \frac{4k-1-j}{4}\rfloor)}{1  +b \left(\prod\limits_{r=0}^{k-1}{\eta_{\tau(4k-1-j)-4k+1+4r}}\right)(ks+\lfloor \frac{4k-1-j}{4}\rfloor+1)}\\
  		=& \eta_{-j} \prod\limits_{s=0}^{n-1}\frac{1 + b\left(\prod\limits_{r=0}^{k-1}{\eta_{-j+4\lfloor \frac{j}{4}\rfloor-4r}}\right)(ks+k-1-\lfloor \frac{j}{4}\rfloor)}{1  +b \left(\prod\limits_{r=0}^{k-1}{\eta_{-j+4\lfloor \frac{j}{4}\rfloor-4r}}\right)(ks+k-\lfloor \frac{j}{4}\rfloor)}.
\end{align}
  \subsection{The case when $a \neq 1$ and $b$ is a  constant}Here, from \eqref{xnsolcst}, the solution is given by 
  {\scriptsize
  	\begin{align}\label{xnsolcstanot1'}
  	\eta_{4kn-4k+1+i}	=& \eta_{i-4k+1}  \prod\limits_{s=0}^{n-1}\frac{a^{ks+\lfloor \frac{i}{4}\rfloor} + b\left(\prod\limits_{j=0}^{k-1}{\eta_{\tau(i)-4k+1+4j}}\right)(\frac{1-a^{ks+\lfloor \frac{i}{4}\rfloor}}{1-a})}{a^{ks+\lfloor \frac{i}{4}\rfloor+1}  +b \left(\prod\limits_{j=0}^{k-1}{\eta_{\tau(i)-4k+1+4j}}\right)(\frac{1-a^{ks+\lfloor \frac{i}{4}\rfloor+1}}{1-a})}
  	\end{align}}
and, similarly, this can also be written in the form
 {\scriptsize 	\begin{align}\label{xnsolcstanot1}
  	\eta_{4kn-j}	=& \eta_{-j}  \prod\limits_{s=0}^{n-1}\frac{a^{ks+k-1-\lfloor \frac{i}{4}\rfloor} + b\left(\prod\limits_{j=0}^{k-1}{\eta_{-j+4\lfloor \frac{j}{4}\rfloor-4r}}\right)(\frac{1-a^{ks+k-1+\lfloor \frac{i}{4}\rfloor}}{1-a})}{a^{ks+k-\lfloor \frac{i}{4}\rfloor}  +b \left(\prod\limits_{j=0}^{k-1}{\eta_{-j+4\lfloor \frac{j}{4}\rfloor-4r}}\right)(\frac{1-a^{ks+k-\lfloor \frac{i}{4}\rfloor}}{1-a})}.
  	\end{align}}
For $a=-1$, the above equation simplifies to
\begin{itemize} 
	\item For $k$ even,
	 		\begin{align}\label{xnsolcsta1keven}
	 	\eta_{4kn-j}	=&  \eta_{-j}  \prod\limits_{s=0}^{n-1}\frac{-(-1)^{\lfloor \frac{i}{4}\rfloor} + b\left(\prod\limits_{j=0}^{k-1}{\eta_{-j+4\lfloor \frac{j}{4}\rfloor-4r}}\right)(\frac{1+(-1)^{\lfloor \frac{i}{4}\rfloor}}{2})}{(-1)^{\lfloor \frac{i}{4}\rfloor}  +b \left(\prod\limits_{j=0}^{k-1}{\eta_{-j+4\lfloor \frac{j}{4}\rfloor-4r}}\right)(\frac{1-(-1)^{\lfloor \frac{i}{4}\rfloor}}{2})}\\
	 	=& \eta_{-j}\left[-1+b\left(\prod\limits_{j=0}^{k-1}{\eta_{-j+4\lfloor \frac{j}{4}\rfloor-4r}}\right)\right]^{(-1)^{\lfloor \frac{j}{4}\rfloor}n}.
	 	\end{align}
	 This result was obtained in \cite{alma} for $b=\pm 1$ (see Theorems 11 and 18).
		\item For $k$ odd,
		 		\begin{align}\label{xnsolcsta1odd}
		 	\eta_{4kn-j}	=& \eta_{-j}  \prod\limits_{s=0}^{n-1}\frac{(-1)^{s-\lfloor \frac{i}{4}\rfloor} + b\left(\prod\limits_{j=0}^{k-1}{\eta_{-j+4\lfloor \frac{j}{4}\rfloor-4r}}\right)(\frac{1-(-1)^{s-\lfloor \frac{i}{4}\rfloor}}{2})}{-(-1)^{s-\lfloor \frac{i}{4}\rfloor}  +b \left(\prod\limits_{j=0}^{k-1}{\eta_{-j+4\lfloor \frac{j}{4}\rfloor-4r}}\right)(\frac{1+(-1)^{s-\lfloor \frac{i}{4}\rfloor}}{2})}\\
		 		=& \begin{cases}
		 		\eta_{-j}, \quad \text{if  $n$ is even } \\ \\
		 		\eta_{-j}\left[{-1+b\left(\prod\limits_{r=0}^{k-1}{\eta_{-j+4\lfloor \frac{j}{4}\rfloor-4r}}\right)}\right]^{(-1)^{\lfloor \frac{j}{4}\rfloor+1}}, \text{ if $n$ is odd}
		 		\end{cases}
		 	\end{align}
for $j=0, 1, \dots, 4k-1$ and furthermore, 
		  		\begin{align}\label{xnsolcsta1odd8k}
		  	\eta_{8kn-j}	=& \eta_{-j}
		  	\end{align}
for all $n$. More explicitly, the $8k$ periodic solutions are as follows:
{\scriptsize
	 \begin{align}
\eta_{i} =& \eta_{4k(1)-(4k-i)}=\frac{\eta_{-4k+i}}{\left[{-1+b\left(\prod\limits_{r=0}^{k-1}{\eta_{-4(r+1)+i}}\right)}\right]}, i=1,2,3,4\\
\eta_{4+i} =& \eta_{4k(1)-(4k-i-4)}={\eta_{-4k+i+4}}{\left[{-1+b\left(\prod\limits_{r=0}^{k-1}{\eta_{-4(r+1)+i}}\right)}\right]}, i=1,2,3,4\\
\vdots & \qquad \qquad \qquad \vdots  \qquad \qquad \qquad \vdots \qquad \qquad \qquad \vdots\nonumber\end{align}\begin{align}
\eta_{4k-8+i} =& \eta_{4k(1)-(8-i)}={\eta_{8-i}}{\left[{-1+b\left(\prod\limits_{r=0}^{k-1}{\eta_{-4(r+1)+i}}\right)}\right]}, i=1,2,3,4\\
\eta_{4k-4+i} =& \eta_{4k(1)-(4k-i-4)}=\frac{\eta_{-4k+i+4}}{\left[{-1+b\left(\prod\limits_{r=0}^{k-1}{\eta_{-4(r+1)+i}}\right)}\right]},\quad i=1,2,3,4\\
\eta_{4k+1+i}=& \eta_{4k(2)-(4k-1-i)}=\eta_{-4k+1+i},\quad i=0, 1, \dots, 4k-1.
\end{align}}
For this special case, the results  were obtained in \cite{alma} for $b=\pm 1$ (see Theorems 9, 15 and 16).
	\end{itemize}
\section{Periodicity and behavior of the solutions}
 \begin{theorem}
 	Let $u_n$ be a solution of 
	\begin{equation}\label{eq1cstp}
 	u_{n+4k}=\frac{u_n}{A+B\prod\limits_{i=1}^{k}
 		u_{n+4(i-1)}},
\end{equation}
 	for some non-zero constants $A\neq 1$ and $B$. Suppose the initial conditions $x_i, \; i=0,\dots, 4k-1$, are such that $\prod_{j=0}^{k-1}
 	u_{4j+p}=(1-A)/B$, $p=0, 1, 2, 3$. Then the solution of \eqref{eq1cstp} is periodic with period $4k$.
 \end{theorem}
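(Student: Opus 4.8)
The plan is to bypass the closed form \eqref{unsolcst} and instead exploit the auxiliary linear recurrence satisfied by the invariant
\[
r_n = \frac{1}{u_n u_{n+4}\cdots u_{n+4k-4}} = \frac{1}{\prod_{i=1}^{k} u_{n+4(i-1)}}.
\]
In the constant-coefficient case $A_n=A$, $B_n=B$, equation \eqref{a9g} becomes the affine recurrence $r_{n+4}=A r_n + B$. Since $A\neq 1$, this map has the unique equilibrium $r^{*}=B/(1-A)$, whose reciprocal is $1/r^{*}=(1-A)/B$. This identification of the hypothesis with an equilibrium is the conceptual heart of the argument.

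First I would translate the assumption. The condition $\prod_{j=0}^{k-1} u_{4j+p}=(1-A)/B$ for $p=0,1,2,3$ says exactly that $r_p = B/(1-A)=r^{*}$ for each residue $p\in\{0,1,2,3\}$. Because $r_{n+4}=A r_n + B$ decouples into four independent scalar recurrences, one per residue class modulo $4$, and the initial term of each class already sits at the fixed point, a one-line induction yields $r_n = r^{*}$ for every $n\geq 0$.

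Then I would substitute this back into the original equation. For all $n$ we now have $\prod_{i=1}^{k} u_{n+4(i-1)} = 1/r_n = (1-A)/B$, so
\[
u_{n+4k} = \frac{u_n}{A + B\cdot \frac{1-A}{B}} = \frac{u_n}{A + (1-A)} = u_n,
\]
which is precisely the assertion that the solution has period $4k$.

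As a cross-check I would note that the same conclusion drops out of \eqref{unsolcst} directly: the interior product $\prod_{j=0}^{k-1} u_{\tau(i)+4j}$ depends only on the residue $\tau(i)\in\{0,1,2,3\}$, so the hypothesis forces $B\prod_{j=0}^{k-1}u_{\tau(i)+4j}=1-A$ for every $i$. Writing $m=ks+\lfloor i/4\rfloor$ and using the geometric sum $\sum_{l=0}^{m-1}A^{l}=(1-A^{m})/(1-A)$, both numerator $A^{m}+(1-A)\frac{1-A^{m}}{1-A}=1$ and denominator $A^{m+1}+(1-A)\frac{1-A^{m+1}}{1-A}=1$ of each factor collapse to $1$, so the product in \eqref{unsolcst} reduces to $u_{4kn+i}=u_i$. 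With the invariant approach there is essentially no obstacle; the only friction I anticipate is the bookkeeping in this second route, namely verifying that the four hypotheses indexed by $p=0,1,2,3$ collectively cover every interior product arising for $i=0,\dots,4k-1$, together with the routine observation that $A\neq 1$ is exactly what makes $r^{*}$ (equivalently the denominator $1-A$) well defined.
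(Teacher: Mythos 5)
Your proposal is correct, and your primary argument takes a genuinely different route from the paper. The paper proves the theorem by substituting the hypothesis into the closed-form solution \eqref{unsolcst}: with $B\prod_{j=0}^{k-1}u_{\tau(i)+4j}=1-A$ the geometric sums $\sum_{l}A^{l}$ collapse so that numerator and denominator of each factor coincide, forcing every factor of the product to equal $1$ and hence $u_{4kn+i}=u_i$ --- exactly your ``cross-check'' route. Your main argument instead works upstream with the invariant $r_n=1/\prod_{i=1}^{k}u_{n+4(i-1)}$ and the affine recurrence $r_{n+4}=Ar_n+B$ (note the paper's display \eqref{a9g} has a typo, writing $A_n$ where $B_n$ is meant, as the surrounding text and \eqref{rnsol} confirm): the hypothesis says precisely that the four decoupled residue-class recurrences all start at the unique fixed point $r^{*}=B/(1-A)$, so $r_n\equiv r^{*}$, the denominator of \eqref{eq1cstp} is identically $A+(1-A)=1$, and $u_{n+4k}=u_n$ follows in one line. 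What your approach buys is conceptual clarity --- it explains \emph{why} the hypothesis $(1-A)/B$ is the right one (it is the reciprocal of the equilibrium of the invariant's recurrence), it needs none of the bookkeeping with $\tau(i)$ and $\lfloor i/4\rfloor$, and it makes transparent that the denominators never vanish along the orbit. What the paper's route buys is consistency with the machinery it has already built: the theorem becomes a direct corollary of the general solution formula \eqref{unsolcst}, with no need to revisit the invariant. Both arguments are sound; your only outstanding bookkeeping obligation (that the four hypotheses indexed by $p$ cover every interior product for $i=0,\dots,4k-1$) is immediate since $\tau(i)\in\{0,1,2,3\}$ and each $r_p$, $p=0,1,2,3$, involves only the initial values $u_p,u_{p+4},\dots,u_{p+4k-4}$.
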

 \begin{proof}
 	Suppose $\prod\limits_{j=0}^{k-1} u_{4j+p}=(1-A)/B$, $p=0, 1, 2, 3$. From \eqref{unsolcst}, we get 
 	\begin{align}\label{unsolcst4p}
 	u_{4kn+i}=&  u_i  \prod\limits_{s=0}^{n-1}\frac{A^{ks+ \lfloor \frac{i}{4}\rfloor} +B \left(\prod\limits_{j=0}^{k-1}{u_{\tau(i)+4j}}\right)\sum\limits_{l=0}^{\substack{ks+\\\lfloor \frac{i}{4}\rfloor-1}}A^l}{A^{ks+ \lfloor \frac{i}{4}\rfloor+1} + B\left(\prod\limits_{j=0}^{k-1}{u_{\tau(i)+4j}}\right)\sum\limits_{l=0}^{\substack{ks+\lfloor \frac{i}{4}\rfloor}}A^l}\\
 	=&  u_i  \prod\limits_{s=0}^{n-1}\frac{A^{ks+ \lfloor \frac{i}{4}\rfloor} +B \left(\frac{1-A}{B}\right)\left(\frac{1-A^{ks+\lfloor \frac{i}{4}\rfloor}}{1-A}\right)}{A^{ks+ \lfloor \frac{i}{4}\rfloor+1} + B\left(\frac{1-A}{B}\right)\left(\frac{1-A^{ks+\lfloor \frac{i}{4}\rfloor+1}}{1-A}\right)}\\ =&  u_i,
 	\end{align} 
for all $i=0,1, \dots, 4k-1$ since $0\leq \tau(i)\leq 3$.
 \end{proof}
	\begin{figure}[ht]
 		\centering
		\includegraphics[scale=0.30]{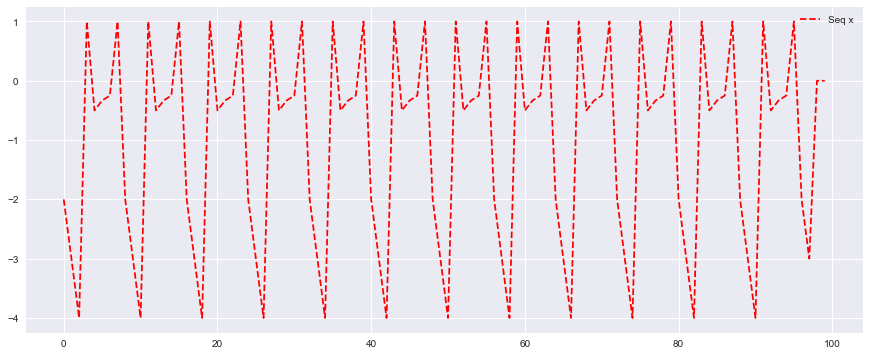}
 		\caption{Graph of 
 			$x_{n+8} = \dfrac{x_n}{(2-x_nx_{n+4})}$, where $x_0=-2,x_1=-3, x_2=-4, x_3=1, x_4=-1/2, x_5=-1/3, x_6=-1/4, x_7=1$  and are such that $x_0x_4=x_1x_5=x_2x_6=x_3x_7=(1-A)/B$.\label{M17_1}}
 	\end{figure}
We plot Figure~\ref{M17_1} to illustrate Theorem 4.1. We note that for $A=-1$ and $B=1$, we get the result in Theorem 13 in \cite{alma} and the result's restriction ($\prod_{r=0}^{k-1} \eta_{-j+4\lfloor \frac{j}{4}\rfloor-4r}=2, j=0, 1, \dots, 4k-1$ or simply $-j+4\lfloor \frac{j}{4}\rfloor=0, 1, 2, 3$) is a special case of the assumption in the above theorem ($\prod_{j=0}^{k-1} u_{4j+p}=(1-A)/B$, that is, $\prod_{r=0}^{k-1} \eta_{p-4r\lfloor \frac{j}{4}\rfloor-4r}=(1-A)/B, p=0, 1, 2, 3$). \par \noindent
Observe that in Theorem 15 in \cite{alma}, the authors ought to add the restriction $v_j\neq -2$. If this condition is not satisfied, the period will be $4k$ and not $8k$ as they clearly stated in Theorem 20 in \cite{alma}.
\begin{theorem}
Let $u_n$ be a solution of 
 	\begin{equation}\label{eq1cstnonp}
 	u_{n+4k}=\frac{u_n}{1+B\prod\limits_{i=1}^{k}
 		u_{n+4(i-1)}},
 	\end{equation}
 	for some non-zero constant $B$.  The zero equilibrium point is non hyperbolic. Furthermore, if the initial conditions $x_i, \; i=0,\dots, 4k-1$, and $B$ are positive, then the solution converges to the zero equilibrium point.
 	\end{theorem}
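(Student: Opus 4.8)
The plan is to treat the two assertions separately, beginning with the non-hyperbolicity of the origin. First I would identify the equilibria: setting all iterates equal to $\bar u$ in \eqref{eq1cstnonp} gives $\bar u(1 + B\bar u^{k}) = \bar u$, so $B\bar u^{k+1} = 0$, and since $B\neq 0$ the only equilibrium is $\bar u = 0$. Next I would linearize using the coefficients $p_{4r} = \partial f/\partial u_{n+4r}$ evaluated at the origin, where $f = u_n/(1 + B\prod_{i=1}^{k} u_{n+4(i-1)})$. A short computation gives $p_0 = 1$, while for each $r = 1,\dots,k-1$ the derivative $\partial f/\partial u_{n+4r}$ carries an explicit factor $u_n$ and hence vanishes at the origin, so $p_{4r} = 0$. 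Substituting these into the characteristic equation \eqref{ceq} collapses it to $\lambda^{4k} - 1 = 0$, whose roots are exactly the $4k$-th roots of unity. Every root has modulus one, so by the definition of non-hyperbolicity given above the origin is non-hyperbolic.

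For the convergence claim I would first observe that positivity is preserved: if $u_n,\dots,u_{n+4k-4}>0$ then in \eqref{eq1cstnonp} the numerator is positive and the denominator exceeds $1$, whence $u_{n+4k}>0$, and induction yields $u_n>0$ for all $n$. Specializing the closed form \eqref{unsolcst} to $A=1$ (equivalently, the $a=1$ form \eqref{xnsolcsta1} written in the $u$-variable) and abbreviating $C_i := B\prod_{j=0}^{k-1} u_{\tau(i)+4j}>0$ and $d := \lfloor i/4\rfloor$, the solution reads
\[
u_{4kn+i} = u_i\prod_{s=0}^{n-1}\frac{1 + C_i(ks+d)}{1 + C_i(ks+d+1)},
\]
a product whose factors all lie in $(0,1)$.

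The crux is to show this product tends to $0$ for every $i$. I would take logarithms and, writing $a_s = 1 + C_i(ks+d)$ so that the denominator equals $a_s + C_i$, use $\ln(a_s+C_i) - \ln a_s \ge C_i/(a_s+C_i)$ to bound each term by $\ln(\text{factor}) \le -C_i/(1 + C_i(ks+d+1))$. Summing over $s$ produces a divergent harmonic-type series, so the logarithm of the product tends to $-\infty$ and hence $u_{4kn+i}\to 0$. (Equivalently, expressing the product through Gamma functions gives $\Gamma(n+\alpha)\Gamma(\beta)/[\Gamma(\alpha)\Gamma(n+\beta)]$ with $\beta-\alpha = 1/k$, and the asymptotic $\Gamma(n+\alpha)/\Gamma(n+\beta)\sim n^{-1/k}\to 0$ yields the same conclusion.) Since the $4k$ residue subsequences $i=0,\dots,4k-1$ exhaust the solution and each tends to $0$, I would conclude $u_n\to 0$.

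The step I expect to be the main obstacle is precisely this product estimate. For $k\ge 2$ the product does \emph{not} telescope, because the argument in numerator and denominator increases by $k$ rather than by $1$ at each step (as it would when $k=1$, where the product collapses to $(1+C_i d)/(1+C_i(n+d))\to 0$ immediately). One therefore cannot evaluate it in closed form and must instead control its decay asymptotically, via the logarithm-and-harmonic-series comparison or the Gamma-function asymptotic described above; everything else in the argument is routine differentiation and induction.
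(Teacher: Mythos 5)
Your proposal is correct and follows the same overall route as the paper: the non-hyperbolicity part is identical (only $p_0=1$ survives at the origin, giving $\lambda^{4k}-1=0$), and the convergence part likewise starts from the closed-form product solution specialized to $A=1$. The genuine difference is at the final step. The paper rewrites each factor as $\Theta(s)=1-\frac{C_i}{1+C_i(ks+\lfloor i/4\rfloor+1)}$, observes $\Theta(s)<1$ for $B>0$, and immediately concludes that the product tends to zero. As you correctly point out, that implication is not valid in general (a product of factors in $(0,1)$ can converge to a positive limit), and for $k\ge 2$ the product does not telescope, so some asymptotic control is genuinely needed. Your logarithm-plus-divergent-harmonic-series bound $\ln\Theta(s)\le -C_i/(1+C_i(ks+d+1))$, or equivalently the Gamma-function asymptotic $\Gamma(n+\alpha)/\Gamma(n+\beta)\sim n^{-1/k}\to 0$ with $\beta-\alpha=1/k$, supplies exactly the missing justification. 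In short, your argument is the paper's argument with the one non-trivial analytic step actually carried out; the paper's version, as written, has a gap precisely where you anticipated the main obstacle.
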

 	\begin{proof}
 		The equilibrium point of \eqref{eq1cstnonp} is $u=0$. Let {\scriptsize  \begin{align}
 			f(u_n, u_{n+4}, \dots, u_{n+4(k-1)})=	u_{n+4k}=\frac{u_n}{1+B\prod\limits_{i=1}^{k}
 				u_{n+4(i-1)}}.\end{align}} So,
 		{\scriptsize 	\begin{align}
 			f_{,u_n}=\frac{1}{(1+B\prod\limits_{i=1}^{k}
 				u_{n+4(i-1)})^2}, \quad
 			f_{,u_{n+4j}}=\frac{-Bu_n^2}{(1+B\prod\limits_{i=1}^{k}
 				u_{n+4(i-1)})^2} \prod\limits_{\substack{ i=1\\i\neq j}}^{k-1}
 			u_{n+4i},\quad j=1,2,\dots, k-1.
 			\end{align}}
 		We have $	f_{,u_n}(0,0,\dots,0)=1$ and $f_{,u_{n+4j}}(0,0,\dots,0)=0, j=1,2,\dots, k-1$. Thus, the characteristic equation associated with \eqref{eq1cstp} is $\lambda ^{4k}-1=0$ and therefore, $|\lambda_i|=1$. Therefore, the zero equilibrium point is non-hyperbolic.\par \noindent
 Suppose the non-zero initial conditions are all positive. From \eqref{unsolcst}, we get 
 		{\scriptsize 
 			\begin{align}\label{unsolcsta1}
 			u_{4kn+i}=&  u_i  \prod\limits_{s=0}^{n-1}\frac{1+B \left(\prod\limits_{j=0}^{k-1}{u_{\tau(i)+4j}}\right)(ks+\lfloor \frac{i}{4}\rfloor)}{1 + B\left(\prod\limits_{j=0}^{k-1}{u_{\tau(i)+4j}}\right)(ks+\lfloor \frac{i}{4}\rfloor+1)}\\
 			=&  u_i  \prod\limits_{s=0}^{n-1}\left( 1- \frac{B \left(\prod\limits_{j=0}^{k-1}{u_{\tau(i)+4j}}\right)}{1 + B\left(\prod\limits_{j=0}^{k-1}{u_{\tau(i)+4j}}\right)(ks+\lfloor \frac{i}{4}\rfloor+1)}\right)\\=& u_i  \prod\limits_{s=0}^{n-1} \Theta(s).\label{theta}\end{align}}
If $B$ is positive, $\Theta(s)<1$, $s=0, 1, \dots, n-1$. Therefore, $u_{n}$ tends to zero as $n$ tends to infinity.
 	\end{proof}
 		\begin{figure}[ht]
 			\centering
 	 		\includegraphics[scale=0.30]{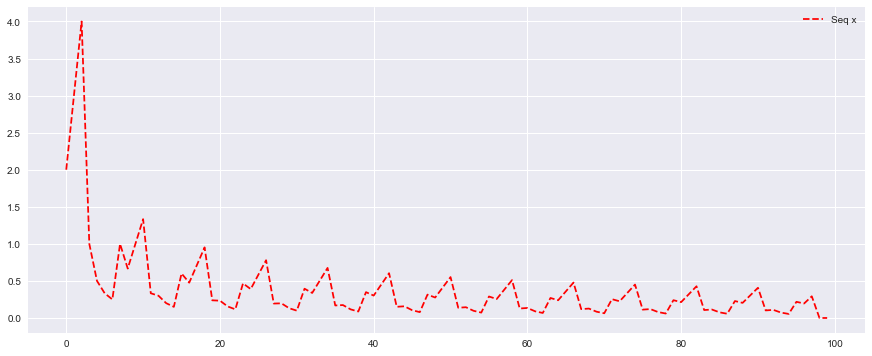}
 			\caption{Graph of 
 				$x_{n+8} = {x_n}/{(2+x_nx_{n+4})}$, where $x_0=2,x_1=3, x_2=4, x_3=1, x_4=1/2, x_5=1/3, x_6=1/4, x_7=1$.\label{M17_2}}
 		\end{figure}
 		We plot Figure~\ref{M17_2} to illustrate Theorem 4.2.
 	\begin{theorem}
 		Assume that $B$ is positive and $u_i\geq 0, i=0, 1, \dots, k-1$.Then the zero equilibrium point $u=0$ of \eqref{eq1cstnonp} is globally asymptotically stable.
 	\end{theorem}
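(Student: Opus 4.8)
The plan is to prove global asymptotic stability by splitting it, per Definition 1.3, into its two constituents: local stability and the global attractor property. The attractor part is already supplied by Theorem 4.2, which shows that for $B>0$ and nonnegative initial data the solution tends to the zero equilibrium as $n\to\infty$. Thus the only genuinely new ingredient is local stability, and I would extract it directly from the closed-form solution \eqref{unsolcsta1} rather than from the linearisation, which is useless here since the equilibrium is non-hyperbolic (the characteristic roots all satisfy $|\lambda_i|=1$, again by Theorem 4.2).

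First I would exploit the multiplicative structure of \eqref{unsolcsta1}. Writing $P_p=\prod_{j=0}^{k-1}u_{4j+p}$ for $p=0,1,2,3$, every factor of the product over $s$ has the shape $\frac{1+BP_{\tau(i)}\,m}{1+BP_{\tau(i)}(m+1)}$ with $m=ks+\lfloor i/4\rfloor\ge 0$. Since $B>0$ and each $P_p\ge 0$, both numerator and denominator are positive and the denominator is at least as large as the numerator, so every factor lies in $(0,1]$. Multiplying over $s=0,\dots,n-1$ then yields the monotone bound $0\le u_{4kn+i}\le u_i$ for all $n\ge 0$ and all $i=0,\dots,4k-1$.

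With this bound in hand, local stability is immediate. Given $\epsilon>0$, I would set $\delta=\epsilon$. If $\sum_{i=0}^{4k-1}|u_i|<\delta$, then in particular $0\le u_i<\delta$ for each $i$, whence $u_{4kn+i}\le u_i<\epsilon$. Because every index $m\ge 0$ is uniquely of the form $4kn+i$ with $0\le i\le 4k-1$, this gives $|u_m-0|<\epsilon$ for all $m\ge 0$, which is exactly local stability in the sense of Definition 1.1. Combining local stability with the global attractor property of Theorem 4.2 shows that $u=0$ is globally asymptotically stable.

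The step to watch is the positivity bookkeeping that forces every factor to be at most $1$: this is precisely where the hypotheses $B>0$ and $u_i\ge 0$ are consumed, and it is the crux of the whole argument. A minor subtlety is the degenerate case in which some $P_{\tau(i)}=0$; then the corresponding factors equal $1$ and $u_{4kn+i}=u_i$ for all $n$. This leaves local stability untouched, since the bound $u_{4kn+i}\le u_i$ still holds, but it is the one situation in which attractivity could fail unless the matching initial value itself vanishes. For genuinely positive initial data all $P_p>0$, and Theorem 4.2 then applies without reservation, so I would either assume strict positivity or dispatch the boundary case by the observation just noted.
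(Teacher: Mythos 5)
Your proof follows essentially the same route as the paper's: both derive the bound $0\le u_{4kn+i}\le u_i$ from the factored closed-form solution \eqref{unsolcsta1} (each factor lying in $(0,1]$ because $B>0$ and the initial data are nonnegative), use it to verify local stability directly from the $\epsilon$--$\delta$ definition, and then invoke Theorem 4.2 for the global-attractor half. Your choice $\delta=\epsilon$ is cleaner than the paper's $\delta=\epsilon/(B+1)$, and your remark about the degenerate case $\prod_{j}u_{4j+\tau(i)}=0$ --- where the solution is constant and attractivity genuinely requires the strict positivity assumed in Theorem 4.2 rather than the $u_i\ge 0$ stated here --- flags a real edge case that the paper's proof passes over silently.
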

 	\begin{proof}
 	The equilibrium point of  \eqref{eq1cstnonp} satisfies $u(1+Bu^k)=0$. Thus, $u=0$. Let $\epsilon \geq 0$ and suppose the $u_i's , i=0, 1, \dots, k-1$ are such that $$|u_i|\leq \frac{\epsilon}{4k(B+1)}, i=0, 1, \dots, k-1.$$
 	We have that 
 	\begin{align}|u_0|+|u_{1}|+\dots+|u_{k-1}|\leq \frac{\epsilon}{B+1}\end{align}
 and (see \eqref{theta}), 
 \begin{align}
 u_{4kn+i}\leq u_i,\end{align} $i=0,1,\dots,k-1$,   for all $n$ if $B\geq 0$.\\
  This implies that for  $|u_{4kn+i}|\leq |u_i|\leq \frac{\epsilon}{4k(B+1)}\leq \epsilon$, we have found $\delta= {\epsilon}/{(B+1)}$ such that $|u_0|+|u_{1}|+\dots|u_{k-1}|\leq \delta$. Thus, the zero equilibrium point is locally stable. On the other hand (see Theorem 4.2), $x_n$ tends to zero as $n$ goes to infinity. The zero equilibrium being a global attractor and locally stable, it is globally asymptotically stable.   
 	\end{proof}
 		\begin{theorem}
 		 	Assume $A\neq 1$. The zero equilibrium point of \eqref{eq1cstp} is asymptotically stable for $|A|> 1$ and unstable for $|A|< 1$. Furthermore, all non zero equilibrium points of \eqref{eq1cstp} are non-hyperbolic.   
 		\end{theorem}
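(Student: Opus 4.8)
The plan is to find the equilibria, linearise \eqref{eq1cstp} about each, and read off the moduli of the characteristic roots via the linearised-stability theorem of the preliminaries. With $f(u_n,\dots,u_{n+4(k-1)})=u_n/\bigl(A+B\prod_{i=1}^{k}u_{n+4(i-1)}\bigr)$, an equilibrium $\bar u$ obeys $\bar u(A+B\bar u^{k})=\bar u$, so either $\bar u=0$ or $\bar u^{k}=(1-A)/B$; in the second case $A+B\bar u^{k}=1$ at the equilibrium, which is the fact that drives the computation. Exactly as in the proof of Theorem 4.2 (but keeping $A$ general) I would compute $f_{,u_n}=A/(A+B\prod u)^2$ and, for $j=1,\dots,k-1$, $f_{,u_{n+4j}}=-Bu_n^{2}\bigl(\prod_{i\neq 1,\,j+1}u_{n+4(i-1)}\bigr)/(A+B\prod u)^2$.

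For the zero equilibrium one has $p_0=f_{,u_n}(0,\dots,0)=A/A^{2}=1/A$, while each $p_{4j}=f_{,u_{n+4j}}(0,\dots,0)=0$ because every such derivative carries a vanishing factor. Hence \eqref{ceq} reduces to $\lambda^{4k}-1/A=0$, so every root has modulus $|A|^{-1/(4k)}$. If $|A|>1$ this is strictly less than one and part (i) of the stability theorem gives asymptotic stability; if $|A|<1$ it exceeds one and part (ii) gives instability.

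For a non-zero equilibrium, using $A+B\bar u^{k}=1$ I would obtain $p_0=A$ and $p_{4j}=-B\bar u^{k}=A-1$ for $j=1,\dots,k-1$, so \eqref{ceq} becomes $\lambda^{4k}-(A-1)(\lambda^{4k-4}+\dots+\lambda^{4})-A=0$. Substituting $\mu=\lambda^{4}$ yields $\mu^{k}-(A-1)(\mu^{k-1}+\dots+\mu)-A=0$, and the decisive step is to factor this. Multiplying by $(\mu-1)$ and telescoping gives
$$(\mu-1)\bigl[\mu^{k}-(A-1)(\mu^{k-1}+\dots+\mu)-A\bigr]=(\mu-A)(\mu^{k}-1),$$
so the polynomial equals $(\mu-A)(\mu^{k-1}+\mu^{k-2}+\dots+1)$. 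Its roots are $\mu=A$ together with the $k$-th roots of unity different from $1$; each of the latter has $|\mu|=1$ and therefore gives $|\lambda|=1$. Thus \eqref{ceq} always has a root of modulus one and every non-zero equilibrium is non-hyperbolic.

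The principal obstacle is precisely this factorisation: once one notices that multiplying by $(\mu-1)$ collapses the geometric-type sum and leaves $(\mu-A)(\mu^{k}-1)$, the unit-modulus roots appear at once. I would also flag the degenerate case $k=1$, where the factor $\mu^{k-1}+\dots+1$ is empty and $\mu=A$ is the only root, so the non-hyperbolicity claim is genuinely a statement about $k\ge 2$.
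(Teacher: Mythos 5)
Your proposal is correct and follows essentially the same route as the paper: the same linearisation giving $\lambda^{4k}-1/A=0$ at the origin and $\lambda^{4k}-(A-1)(\lambda^{4k-4}+\dots+\lambda^{4})-A=0$ at a non-zero equilibrium, and the same trick of multiplying by $\mu-1$ (the paper uses $1-\lambda^{4}$) to collapse the latter to $(\lambda^{4}-A)(\lambda^{4k}-1)=0$. Your explicit factorisation $(\mu-A)(\mu^{k-1}+\dots+1)$ is a slightly cleaner way of discarding the extraneous roots that the multiplication introduces (the paper instead excludes the indices $r=0,k,2k,3k$ by hand), and your remark that the non-hyperbolicity claim requires $k\ge 2$ matches the paper's ``for $k>1$'' qualifier.
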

 		\begin{proof}
 		The equilibrium points of \eqref{eq1cstp} satisfy $u(A+Bu^k-1)=0$. Let {\scriptsize  \begin{align}
 		f(u_n, u_{n+4}, \dots, u_{n+4(k-1)})=	u_{n+4k}=\frac{u_n}{A+B\prod\limits_{i=1}^{k}
 			u_{n+4(i-1)}}.\end{align}} We have
{\scriptsize 	\begin{align}
 		f_{,u_n}=\frac{A}{(A+B\prod\limits_{i=1}^{k}
 		u_{n+4(i-1)})^2}, \quad
 	f_{,u_{n+4j}}=\frac{-Bu_n^2}{(A+B\prod\limits_{i=1}^{k}
 		u_{n+4(i-1)})^2} \prod\limits_{\substack{ i=1\\i\neq j}}^{k-1}
 	u_{n+4i},\quad j=1,2,\dots, k-1.
 	\end{align}}
  \begin{itemize}
 	\item For the equilibrium point $u=0$, we have $	f_{,u_n}(0,0,\dots,0)=1/A$ and $f_{,u_{n+4j}}(0,0,\dots,0)=0, j=1,2,\dots, k-1$. Thus, the characteristic equation associated with \eqref{eq1cstp} is $\lambda ^{4k}-\frac{1}{A}=0.$ Therefore, $|\lambda |< 1$ if $|A|> 1$ (that is, locally asymptotically stable) and $|\lambda |> 1$ if $|A|< 1$ (that is, unstable).
 		\item The non-zero equilibrium points $u$ satisfy $A+Bu^k-1=0$. Then $f_{,u_n}(u,u,\dots,u)=A$ and $f_{,u_{n+4j}}(u,u,\dots,u)=A-1, j=1,2,\dots, k-1$. Thus, the characteristic equation associated with \eqref{eq1cstp} is 
 		\begin{align}\label{charaanot1}
 		\lambda ^{4k}-(A-1)\lambda ^{4k-4}-\dots-(A-1)\lambda ^4 -A=0.
 		\end{align}
 Multiplying the above equation by $1-\lambda ^4$, we get (after simplification) \begin{align}\label{chara}
 (1-\lambda ^{4k})(\lambda ^4-A)=0.\end{align}		
It follows that, for $A< 0$, the solutions of \eqref{chara} are $\lambda_r=-Ae^{i\frac{i(2r+1)\pi}{4}},\; r=0,1,2,3$ or $\lambda_r =e^{i\frac{2r\pi}{4k}}, p= 1,2,\dots, k-1, k+1, \dots, 2k-1,2k+1,\dots,2k-1, 2k+1,\dots,3k-1,3k+1,\dots, 4k-1.$ 	For $A>0$, the solutions of \eqref{chara} are $\lambda_r=Ae^{i\frac{2r\pi}{4}},\; r=0,1,2,3$ or $\lambda_r =e^{i\frac{2r\pi}{4k}}, p= 1,2,\dots, k-1, k+1, \dots, 2k-1,2k+1,\dots,2k-1, 2k+1,\dots,3k-1,3k+1,\dots, 4k-1.$  		
 Therefore, for $k>1$, there exists a root of \eqref{charaanot1} with modulus equal to one.
 \end{itemize}
 		\end{proof}
\section{Conclusion}
We studied the difference equation $\eta _{n+1}={\eta_{n-4k+1}}/{(a_n +b_n \prod_{i=1}^{k}
	\eta_{n-4i+1})}$ by performing its symmetry analysis and we used the canonical coordinate to obtain its invariants. These invariants are utilized to derive the solutions in closed form. We demonstrated that all the formula solutions in \cite{alma} are special cases of our findings. Some conditions for existence of $4k$ and $8k$ periodic solutions were established. Finally, we investigated the stability of the solution of the difference equation and proved the existence of non-hyperbolic and globally asymptotically stable equilibrium points. 


\begin{thebibliography}{22}
\bibitem{Maeda} Maeda, S. { The similarity method for difference equations}. {\it IMA J. Appl. Math.} {\bf 1987}, {\em 38}, 129--134.
\bibitem{hyd} 
Hydon, P.E.\textit{Difference Equations by Differential Equation Methods}, {Cambridge University Press}: Cambridge, UK, 2014.
\bibitem{mmd}
Folly-Gbetoula, M.; Kgatliso Mkhwanazi, K.; Nyirenda, D. On a study of a family  of higher order recurrence relations. {\it Mathematical Problems in Engineering} \textbf{2022}, {\em 2022}, Article ID 6770105, 11 pages.
\bibitem{ladas} 
Grove, E. A.; Ladas, G. {\it Periodicities in Nonlinear Difference Equations}, Vol. 4; Chapman \& Hall/CRC: Boca Raton, USA, 2005. 
\bibitem{banas} Banasiak, J. {\it Mathematical Modelling in One Dimension: An Introduction via Difference and Differential Equation}, Cambridge University Press: Cambridge, UK, 2013.
\bibitem{beverton} 
Bohner, M.; Dannan, F. M.; Streipert, S. A nonautonomous Beverton–Holt equation of higher order. {\it J. Math. Anal. Appl.} {\bf 2018}, {\em 457}, 114--133.
\bibitem{El}
Elsayed, E. M.; Ibrahim, T. F. Periodicity and solutions for some systems of non-linear rational difference equations. \textit{Hacettepe Journal of Mathematics and Statistics} \textbf{2015}, {\em 44:6}, 1361--1390.
\bibitem{alma} Aljoufi, L. S.;  Almatrafi, M. B.; Seadawy, A. R.  Dynamical analysis of discrete time equations with	a generalized order. {\it Alexandria Engineering Journal} {\bf 2023}, {\em 64}, 937--945.
\bibitem{ndm} Mnguni, N.; Nyirenda, D.; Folly-Gbetoula, M.  Symmetry Lie Algebra and Exact Solutions of Some fourth-order Difference Equation. \textit{ Journal of Nonlinear Sciences and Applications}, \textbf{2018}, {\em 11:11}, 1262--1270.
\bibitem{QM}  Folly-Gbetoula, M.; Nyirenda, D. Lie Symmetry Analysis and Explicit Formulas for Solutions of some Third-order Difference Equations. \textit{Quaestiones Mathematicae} \textbf{2019}, {\em 42:7}, 907--917.
\bibitem{DC} Mnguni, N., Folly-Gbetoula, M. Invariance analysis of a third-order difference equation with variable coefficients. {\it Dynamics of Continuous, Discrete and Impulsive Systems
	Series B: Applications \& Algorithms} \textbf{2018}, {\em 25}, 63-73.
\bibitem{jocaa} Folly-Gbetoula, M. Nyirenda, D. On some sixth-order rational recursive sequences. {\it Journal of computational analysis and applications}  \textbf{2019}, {\em 27:6}, 1057--1069. 
\bibitem{QR} Quispel, G. R. W.; Sahadevan, R. Lie symmetries and the integration of difference equations. {\it Physics Letters A} \textbf{1993}, {\em 184}, 64--70.
\bibitem{md} Folly-Gbetoula, M.; Nyirenda, D. A generalised two-dimensional system of higher order recursive sequences. \textit{Journal of Difference Equations and Applications} \textbf{2020}, {\em 26:2}, 244--260.
\bibitem{JV} Joshi, N.; Vassiliou, P. The existence of Lie Symmetries for First-Order Analytic Discrete Dynamical Systems. \textit{Journal of Mathematical Analysis and Applications} \textbf{1995},{\em 195}, 872--887.
\end{thebibliography}
\end{document}